\label{key}
\documentclass[12pt, reqno]{amsart}
\usepackage{amssymb, amsthm, amsmath, amsfonts}
\usepackage{array, epsfig}
\usepackage{bbm}
\usepackage{hyperref}
\usepackage[numbers,square]{natbib}
\usepackage{color}
\usepackage{graphicx}

\setlength{\oddsidemargin}{-0.0in} \setlength{\textwidth}{6.5in}
\setlength{\topmargin}{-0.0in} \setlength{\textheight}{8.4in} \evensidemargin
\oddsidemargin
\parindent=8mm

\newtheorem{theorem}{Theorem}[section]

\newtheorem{lemma}[theorem]{Lemma}
\newtheorem{corollary}[theorem]{Corollary}

%%%%%%%%%%%%%%%%%%%%%%%%%%%%%%%%%%%%%%%%%%%%%%%%%%%%f
\newcommand{\E}{\mathbb{E}\,}
\renewcommand{\P}{\mathbb{P}}
\newcommand{\alg}{\mathbb{A}}
\newcommand{\R}{\mathbb{R}}
\newcommand{\Z}{\mathbb{Z}}
\newcommand{\C}{\mathbb{C}}
\newcommand{\N}{\mathbb{N}}

\newcommand{\bx}{\mathbf{x}}
\newcommand{\bw}{\mathbf{\lambda}}

\newcommand{\dd}{{\rm d}}

\newcommand{\B}{\mathbb{B}}
\newcommand{\T}{\mathbb{T}}
\newcommand{\iu}{\mathrm{i}\mkern1mu}

\newcommand{\h}{Q} % upper bound for height
 % Vandermonde determinantf

%%%%%%%%%%%%%%%%%%%%%%%%%%%%%%%%%%%%%%%%%%%%%%%%%%%%

\newcommand{\Vol}{\mathop{\mathrm{Vol}}\nolimits}

\newcommand{\eqdistr}{\stackrel{d}{=}}

%for comments
\newcommand{\tc}{}

%\def\tareesidedbox#1{\setbox0=\hbox{$#1$}\dimen0=\wd0 \advance\dimen0 by3pt\rlap{\hbox{\vrule height9pt width.4pt depth2pt \kern-.4pt\vrule height9.4pt width\dimen0 depth-9pt\kern-.4pt \vrule height9pt width.4pt
%depth2pt}} \relax \hbox to\dimen0{\hss$#1$\hss}}
%\newcommand{\house}[1]{\tareesidedbox{#1}}

%%%%%%%%%%%%%%%%%%%%%%%%%%%%%%%%%%%%%%%%%%%%%%%%%%%%
%\newcommand{}{\textcolor{red}}

% \setlength{\textwidth}{160mm}
% \setlength{\textheight}{220mm}
% \setlength{\topmargin}{-1mm} \addtolength{\oddsidemargin}{-13mm}

%opening
\title{Distribution of complex algebraic numbers \\on the unit circle}

\keywords{Bombieri norm, distribution of algebraic numbers, integral polynomials, random trigonometric polynomials, real zeros }
\subjclass[2010]{Primary, 11N45; secondary, 11C08, 30C15} %http://www.ams.org/msc/msc2010.html
\thanks{\tc{The research of the first author was supported by  SFB 1283 and the research of the second author was supported by  IRTG 2235 at Bielefeld University (Germany). }}

\author{Friedrich G\"otze}
 \address{Friedrich G\"otze: Faculty of Mathematics,
 Bielefeld University,
 P. O. Box 10 01 31,
 33501 Bielefeld, Germany}
 \email{goetze@math.uni-bielefeld.de}

\author{Anna Gusakova}
\address{Anna Gusakova: Faculty of Mathematics,
 Bielefeld University,
 P. O. Box 10 01 31,
 33501 Bielefeld, Germany}
\email{agusakov@math.uni-bielefeld.de}

\author{Zakhar Kabluchko}
\address{Zakhar Kabluchko: M\"unster  University,   
Orl\'eans-Ring  10,   
48149  M\"unster,   Germany}
\email{zakhar.kabluchko@uni-muenster.de}

\author{Dmitry Zaporozhets}
\address{Dmitry Zaporozhets: St.\ Petersburg Department of Steklov Mathematical Institute,
Fontanka~27,
191023 St.\ Petersburg, Russia}
\email{zap1979@gmail.com}

\begin{document}

\begin{abstract}
For $-\pi\leq\beta_1<\beta_2\leq\pi$ denote by $\Phi_{\beta_1,\beta_2}(Q)$ the  number of  algebraic numbers on the unit circle with arguments in $[\beta_1,\beta_2]$ of degree $2m$ and \tc{with  elliptic height} at most $Q$. We show that 
\[
\Phi_{\beta_1,\beta_2}(Q)=Q^{m+1}\int\limits_{\beta_1}^{\beta_2}{p(t)}\,{\rm d}t+O\left(\h^m\,\log Q\right),\quad Q\to\infty,
\]
where $p(t)$  coincides up to a constant factor with the density of the roots of some random trigonometric polynomial. This density is calculated explicitly using the Edelman--Kostlan formula.
\end{abstract}

\maketitle

\section{Introduction}

The problem of \tc{distribution of real and complex algebraic numbers has been studied intensively  in the last two decades
	 %In particular, 
%it is related {the relation} 
and turned out to be closely related with the distribution of  roots of random polynomials. We first give  a  brief review of recent results 
in this area before  describing the problem considered in this paper.}

Let $\alg$ denote the field of (all) algebraic numbers over $\mathbb{Q}$ and let $\alg_n$ denote the set of algebraic numbers of degree $n \in \N$.
%First of all we emphasize 
\tc{Note that the set $\alg_n$ is countable and any open subset  of $\R$ or $\C$ contains already} infinitely many algebraic numbers. In order to study the distribution of \tc{those} algebraic numbers, we need to
\tc{select finite (ordered) subsets of }
%to order them to be able to pick finite subsets of 
$\alg_n$. To this end, we consider a \emph{height function} $h:\alg\to\R_+$ such that for any $n\in\N$ and $\h>0$ there are only finitely many algebraic numbers
$\alpha$ of degree~$n$ with $h(\alpha)\leq \h$. Note that 
\tc{one usually
	% it is 
	requires (which  we will always assume) that} $h(\alpha')=h(\alpha)$ for all conjugates of~$\alpha$. 

A natural question is to \tc{determine  the asymptotics of the number of $\alpha\in\alg_n$ lying}  in a given subset of $\R$ or $\C$ such that $h(\alpha)\leq\h$ and degree $n$ is \emph{fixed} {as $\h\to\infty$}. 
%This problem was first studied by Chern and Vaaler~\cite{CV01} who considered the Mahler measure as a height function. 
\tc{This problem has been 
 studied by Masser and Vaaler~\cite{MV08} with height function
 being the Mahler measure}. Later Koleda~\cite{dK14} \tc{determined the} asymptotic number of  real algebraic numbers ordered by the na\"{\i}ve height; the same result for complex algebraic numbers was obtained in~\cite{GKZ15}. A generalization of the na\"{\i}ve height, the weighted $l_p$-norm (which also generalizes the length, the \tc{Euclidean} norm, and the Bombieri $p$-norm), was considered in~\cite{GKZ17}. Another interesting example of heights -- \tc{namely} the house of an algebraic number -- was studied in~\cite{CH17}, \tc{which studies the distribution of Perron numbers.}%was investigated.

%Another height funciton, so-called ''na\"{\i}ve'' height which has many applications in metric number theory and is well understood. The first results in this direction are related to the construction of regular systems of algebraic numbers \cite{vB99, BV99, yB04}. Later, the asymptotic formula of $\Phi_{\infty,\bw^1,1,0}(\h,I)$ for a given interval $I\in\R$ was obtained in \cite{dK14} and the asymptotic formula of $\Phi_{\infty,\bw^1,0,1}(\h,B)$ for a given domain $B\in\C$ was proved in \cite{GKZ15}. The more general problem, namely the counting of vectors with real algebraic conjugate coordinates, was considered in \cite{GKZ15c}.

In the \tc{present note} we would like to study the distribution of  algebraic numbers on the \emph{unit circle}. Although our methods work for any weighted $l_p$-norm (including \tc{the} na\"{\i}ve height), we consider the weighted \tc{Euclidean} (or elliptic) height only: this case corresponds to the simplest asymptotic distribution formula. 

The paper is organized as follows. In the next section, we formulate our main result Theorem~\ref{0001} and give some corollaries. The proof of Theorem~\ref{0001} is given in Section~\ref{0021}. Section~\ref{2349} contains the proof of some auxiliary statements and the corollaries are proved in Section~ \ref{0023}.

\section{Basic notation and main result}\label{0913}
Given a polynomial $P(t):=a_nt^n+\ldots+a_1t+a_0$ and a vector of positive weights $\bw=(\lambda_0, \lambda_1,\dots,\lambda_n)$ define the elliptic height  of $P$ as
\[
h_{\bw}(P):=\left(\sum\limits_{k=0}^{n}\frac{a_k^2}{\lambda_k^2}\right)^{1/2}.
\]

Let $\mathcal{P}(\h)$ denote the class of integral polynomials (that is, with integer coefficients) of degree $n$ and with height $h_{\bw}$ at most $\h$:
\[
\mathcal{P}(\h):=\{P\in\Z[t]\colon\deg P=n, \, h_{\bw}(P)\leq\h\}.
\]
We say that an integral polynomial  is \emph{prime}, if it is irreducible over $\mathbb{Q}$, primitive (with co-prime coefficients), and its leading coefficient is positive. Denote by
$\mathcal{P}^{*}(\h)$ the class of prime polynomials from $\mathcal{P}(\h)$:
\[
\mathcal{P}^{*}(\h):=\{P\in\mathcal{P}(\h)\colon P \text{ is prime}\}.
\]

The \emph{minimal polynomial} of an algebraic number $\alpha$ is the uniquely defined prime polynomial $P\in\Z[t]$ satisfying $P(\alpha)=0$. Then, the  elliptic height of  $\alpha$ is defined as {the elliptic height of its minimal polynomial:}
\[
h_{\bw}(\alpha):=h_{\bw}(P).
\]
The other roots of $P$ are called algebraic conjugates of $\alpha$.%, and hence $h_{\bw}(\alpha)$ is defined correctly. 

We aim to investigate the asymptotic behavior of the algebraic numbers on the unit circle $\T\subset\mathbb{C}$:
%which we denote by $\T$:
\[
\T:=\{z\in\C:|z|=1\}.
\]

We start with the simple observation that any algebraic number on $\T$ has even degree and the coefficients of its minimal polynomial posses some symmetry property. Indeed, consider an algebraic number $\alpha\in \T$ with the minimal polynomial
\[
P(t)=a_nt^n+\ldots +a_1t+a_0.
\] 
Since the coefficients of $P$ are real, $\bar\alpha$ is also a root of $P$. Moreover, $\alpha\in \T$ implies $\bar{\alpha}=1/\alpha$. Thus,
\[
P(\alpha)=P\left(\frac{1}{\alpha}\right)=0,
\]
{which means that $\alpha$ is a root of the polynomial}
%It is known that $P$ is a minimal polynomial of all conjugates of $\alpha$, including $1/\alpha$. Thus, as $1/\alpha$ is a root of the polynomial
\[
\tilde{P}(t):=t^nP(t^{-1})=a_0t^n+\ldots +a_{n-1}t+a_n.
\] 
{Hence, by definition of minimal polynomial} we \tc{conclude} that $P$ is a multiple of $\tilde P$ \tc{which leaves us with two possibilities only:}
% For that, there are only two possibilities:
 $P\equiv-\tilde P$ or $P\equiv\tilde P$. The former one would imply that $1$ is a root of $P$ which is impossible due to its irreducibility. Therefore $P\equiv\tilde P$ which means
\[
a_k=a_{n-k},\quad 0\leq k\leq n.
\]
For odd $n$, this condition implies that $-1$ is a root of $P$ which, again, contradicts  
with its irreducibility. Thus from now on we consider only {algebraic numbers of} even degree
\[
n=2m,\quad m\geq 1.
\]
The number $m$ \tc{will stay} %stays fixed 
throughout the 
paper.

Now we are ready to formulate our main result. For 
\begin{equation}\label{2325}
-\pi\leq\beta_1<\beta_2\leq\pi
\end{equation}
denote by $\Phi_{\beta_1,\beta_2}(Q)$ the  number of  algebraic numbers \tc{of degree $n=2m$}  on the unit circle with arguments in $[\beta_1,\beta_2]$  and with \tc{elliptic height} at most $\h$:
\[
\Phi_{\beta_1,\beta_2}(Q):=\#\left\{\theta\in[\beta_1,\beta_2]\colon e^{i\theta}\in\alg_{2m}, h_{\bw}\left(e^{i\theta}\right)\leq\h\right\}.
\]
Denote by $\zeta(\cdot)$  the Riemann zeta function and by $\B^{m+1}$  the $(m+1)$-dimensional unit ball of volume $\Vol(\B^{m+1})$.
\begin{theorem}\label{0001}
For any fixed symmetric vector of positive weights $\bw=(\lambda_0,\dots,\lambda_m,\dots,\lambda_{0})$   we have
\[
\Phi_{\beta_1,\beta_2}(Q)=\frac{\Vol(\B^{m+1})}{2^{m/2+1}\pi\zeta(m+1)}\lambda_0\dots\lambda_m\h^{m+1}\int\limits_{\beta_1}^{\beta_2}{p_{\bw}(t)}\,\dd t+O\left(\h^m\,\log\h\right),\quad Q\to\infty,
\]
where 
\begin{equation}\label{eq15}
p_{\bw}(t):=\left[\frac{\partial^2}{\partial x\partial y}\log\left(\frac{\lambda_m^{2}}{2}+\sum\limits_{k=1}^{m}\lambda_{m-k}^{2}\,\cos(kx)\cos(ky)\right)\Big|_{x=y=t}\right]^{1/2}.
\end{equation}
The implicit constant  depends on $\bw$ and $m$ only and the $\log\h$ factor  can be omitted for $m\geq3$.

%and $l(m)$ is defined by
%\begin{equation}\label{eq4}
%l(m) =  \begin{cases}
%   1, & m=2,\\
%   0,  & m\geq 3.
% \end{cases}
%\end{equation}
\end{theorem}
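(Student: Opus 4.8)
The plan is to reduce the counting of algebraic numbers on $\T$ to a lattice-point count for the symmetric coefficient vectors, and then to apply a quantitative integral-geometric result for the number of real zeros of random trigonometric polynomials. Here is the structure I would follow.

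\textbf{Step 1: Reduction to self-reciprocal polynomials.} As already observed in the excerpt, an algebraic number $e^{i\theta}\in\alg_{2m}$ has a minimal polynomial $P$ of degree $n=2m$ with the palindromic symmetry $a_k=a_{n-k}$. Conversely, every self-reciprocal prime polynomial of degree $2m$ has all its roots on $\T$. Writing $t^{-m}P(t)$ for such a $P$ and substituting $t=e^{i\theta}$, one gets, up to the factor $2$ and the choice of $\cos$ over $\sin$, a cosine polynomial
\[
G(\theta)=\frac{a_m}{2}+\sum_{k=1}^{m}a_{m-k}\cos(k\theta),
\]
whose real zeros in $(-\pi,\pi]$ are exactly the arguments of the roots of $P$. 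So $\Phi_{\beta_1,\beta_2}(Q)$ equals the number of pairs $(\theta,\mathbf a)$ with $\mathbf a=(a_0,\dots,a_m)\in\Z^{m+1}$, $\gcd$-primitive, with positive leading term and irreducible associated polynomial, the elliptic-height constraint $\sum a_k^2/\lambda_k^2\le Q^2$ (which, by the symmetry of $\bw$, only involves $a_0,\dots,a_m$), and $G(\theta)=0$ with $\theta\in[\beta_1,\beta_2]$.

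\textbf{Step 2: Lattice-point count and the co-area/Edelman--Kostlan computation.} Ignoring irreducibility and primitivity for the moment, one counts integer points $\mathbf a$ in the ellipsoid $\{\sum a_k^2/\lambda_k^2\le Q^2\}$ weighted by the number of zeros of the corresponding $G$ in $[\beta_1,\beta_2]$. As $Q\to\infty$ this Riemann sum converges to the integral over the scaled ellipsoid of the zero-counting function; converting to polar-type coordinates in $\mathbf a$ and using that the zero set of $G$ is invariant under scaling $\mathbf a\mapsto c\mathbf a$, the radial integration produces the factor $\Vol(\B^{m+1})\lambda_0\cdots\lambda_m Q^{m+1}/(m+1)$ and leaves an integral over the unit sphere of the number of zeros of $G_{\mathbf u}$, $\mathbf u\in S^m$. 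By the Kac--Rice/Edelman--Kostlan formula, integrating the zero count of $G_{\mathbf u}$ over the uniform measure on $S^m$ equals $\frac{1}{\pi}\int_{\beta_1}^{\beta_2}$ of the square root of $\frac{\partial^2}{\partial x\partial y}\log K(x,y)\big|_{x=y=t}$, where $K(x,y)=\E[G(x)G(y)]$ for Gaussian coefficients is exactly $\frac{\lambda_m^2}{2}+\sum_{k=1}^m\lambda_{m-k}^2\cos(kx)\cos(ky)$ (the Gaussian and spherical averages agree because the zero count is $0$-homogeneous in $\mathbf a$). This is where $p_{\bw}(t)$ in \eqref{eq15} comes from; the constant $2^{-m/2-1}$ absorbs the normalization of the half-dimensional sphere volume versus $\Vol(\B^{m+1})$ together with the $a_m/2$ in $G$.

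\textbf{Step 3: Sieving to prime polynomials and error terms.} The count above is over all integer vectors; one must remove non-primitive vectors (a $1/\zeta(m+1)$ density correction, obtained by Möbius inversion over the common divisor $d$, with the tail in $d$ contributing to the error) and reducible polynomials (which are negligible, of order $O(Q^m)$, by standard bounds on the number of reducible integer polynomials in an expanding box — this is where results of the type used in \cite{GKZ15,GKZ17} enter). The main error term $O(Q^m\log Q)$ comes from: (i) the lattice-point discrepancy in the ellipsoid weighted by the $O(1)$-on-average but $O(\log Q)$-in-sup zero count of trigonometric polynomials of degree $m$ (hence the $\log Q$, removable for $m\ge3$ where sharper sphere lattice-point bounds apply), and (ii) the $\zeta$-tail and reducible-polynomial contributions, both $O(Q^m)$.

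\textbf{Main obstacle.} The delicate point is the uniformity in Step 2--3: one needs the zero-counting function $\mathbf u\mapsto\#\{G_{\mathbf u}=0\}$ to be controlled well enough (pointwise $O(\log Q)$, $L^1$-bounded, and with good equidistribution of the rescaled lattice) so that replacing the sum by the integral is valid with the claimed error, and one must verify that the Gaussian (Edelman--Kostlan) density genuinely equals the spherical average — this uses homogeneity of the zero count but requires care near the locus where $G_{\mathbf u}$ has a multiple zero, a measure-zero but potentially singular set for the Kac--Rice integrand. Handling the reducible-polynomial exceptional set with the correct power of $Q$ in the chosen elliptic-height geometry is the other technical hurdle, and is presumably dispatched by the auxiliary lemmas announced for Section~\ref{2349}.
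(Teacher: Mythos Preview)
Your outline is essentially the paper's proof: reduce to palindromic polynomials, pass to the cosine polynomial $G$, count primitive integer coefficient vectors in the ellipsoid weighted by the number of zeros in $[\beta_1,\beta_2]$, remove reducibles, and compute the main term via Edelman--Kostlan. The paper makes your Step~2 rigorous precisely by stratifying the ellipsoid into the sets $A_l=\{\mu_G([\beta_1,\beta_2])=l\}$ and applying the primitive lattice-point lemma to each $A_l$ separately; this is why the Lipschitz control of $\partial A_l$ (your ``multiple-zero locus'') is the heart of the argument rather than a side issue.

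Three corrections. First, the converse in Step~1 is false: a self-reciprocal prime polynomial need not have all roots on $\T$ (e.g.\ $t^2+3t+1$). Fortunately you never use it---real zeros of $G$ in $[\beta_1,\beta_2]$ correspond exactly to roots of $P$ on $\T_{\beta_1,\beta_2}$, which is all that matters. Second, the height constraint on $(a_0,\dots,a_m)$ is $a_m^2/\lambda_m^2+2\sum_{k<m}a_k^2/\lambda_k^2\le Q^2$ (each $a_k$ with $k<m$ occurs twice in $h_\bw(P)^2$); this ellipsoid has volume $2^{-m/2}\lambda_0\cdots\lambda_m\Vol(\B^{m+1})$, which together with the factor $1/2$ for the sign of the leading coefficient gives the $2^{-m/2-1}$. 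Third, and most substantively, your account of the error is wrong: a degree-$m$ cosine polynomial has at most $2m$ real zeros in $(-\pi,\pi]$, so the zero count is uniformly bounded, not $O(\log Q)$. The $\log Q$ arises instead from the primitive lattice-point discrepancy when $m+1=2$ (i.e.\ $m=1$) and from the reducible-polynomial bound $r(Q)\ll Q^m\log Q$ when $m=2$; for $m\ge 3$ neither source produces a logarithm, which is exactly why it can be dropped there.
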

%\begin{remark}
%	It's easy to see (for example from the proof of Theorem \ref{0001}) that for $n=2m+1$ and any $\bw$, $\beta_1$, $\beta_2$ we have $\Phi_{\beta_1,\beta_2}}(Q)=0$.
%\end{remark}
In this general form, the limit density $p_{\bw}$ is difficult to analyze. However, there are some examples of $\lambda$ where {\eqref{eq15} can be simplified}.
%it is possible to simplify~\eqref{eq15}. 
The first one is the \tc{vector of Bombieri 2-norm weights}.

%The equality (\ref{eq15}) is not good enough for analysing the behavior of the function $p_{\bw}(t)$, but for some special vectors of weights $\bw$ it can be simplified. The first case is  $\bw=\left(\binom{2m}{k}^{-1/2}\right)_{k=0}^n$, which corresponds to the Bombieri 2-Norm.

\begin{corollary}\label{0003}
For  $\lambda_k=\sqrt{\binom{2m}{k}},k=0,\dots,2m,$ we have
	\[
	p_{\bw}(t)=\sqrt{\frac{m}{2}}\cdot\frac{\left|\sin t\right|\,\sqrt{\sum\limits_{k=0}^{2m-2}\left(\cos t\right)^{2k}+(2m-1)\left(\cos t\right)^{2m-2}}}{\left(\cos t\right)^{2m}+1}.
	\]
%\[
%p_{\bw}(t)=\sqrt{\frac{m}{2}}\,|\sin t|\,\frac{\sqrt{\big(t(\cos t)^{2m} +q_m\big)\sum\limits_{k=0}^{m-2}\left(\cos t\right)^{2k}+2m\,q_m(\cos t)^{2m-2}}}{(\cos t)^{2m}+q_m},
%\]
%where
%\[
%q_m:=1-\frac{\binom{2m}{m}}{2^{2m}}.
%\]
%\[
%p_{\bw}(t)=\sqrt{\frac{m}{2}}\cdot\frac{\left|\sin t\right|\,\left(\left(\left(\cos t\right)^{2m}+1-\frac{\binom{2m}{m}}{2^{2m}}\right)\cdot\sum\limits_{k=0}^{m-2}\left(\cos t\right)^{2k}+2m\,\left(1-\frac{\binom{2m}{m}}{2^{2m}}\right)\left(\cos t\right)^{2m-2}\right)^{1/2}}{\left(\cos t\right)^{2m}+1-\frac{\binom{2m}{m}}{2^{2m}}}.
%\]
\end{corollary}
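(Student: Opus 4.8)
\emph{Proof idea.} The plan is to exploit the combinatorial structure of the Bombieri weights to collapse the sum inside the logarithm in \eqref{eq15} into an elementary closed form, and only then differentiate. Write $F(x,y)$ for the argument of that logarithm, so that with $\lambda_{m-k}^{2}=\binom{2m}{m-k}$,
\[
F(x,y)=\frac12\binom{2m}{m}+\sum_{k=1}^{m}\binom{2m}{m-k}\cos(kx)\cos(ky).
\]
First I would use $\cos(kx)\cos(ky)=\tfrac12\bigl(\cos(k(x-y))+\cos(k(x+y))\bigr)$ and then, with $\theta=x-y$ and $\theta=x+y$ separately, the identity
\[
\binom{2m}{m}+2\sum_{k=1}^{m}\binom{2m}{m-k}\cos(k\theta)=\Bigl(2\cos\tfrac{\theta}{2}\Bigr)^{2m},
\]
which is the real form of $\sum_{j=0}^{2m}\binom{2m}{j}e^{\iu(j-m)\theta}=\bigl(e^{\iu\theta/2}+e^{-\iu\theta/2}\bigr)^{2m}$. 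This produces the closed form $F(x,y)=2^{2m-2}\bigl(\cos^{2m}\tfrac{x-y}{2}+\cos^{2m}\tfrac{x+y}{2}\bigr)$.

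Next I would pass to the variables $u=(x-y)/2$, $v=(x+y)/2$, under which $\partial_x\partial_y=\tfrac14(\partial_v^{2}-\partial_u^{2})$. Writing $\phi(s):=\cos^{2m}s$ and $S:=\phi(u)+\phi(v)$, a direct computation gives
\[
\partial_x\partial_y\log F=\frac14\left(\frac{\phi''(v)-\phi''(u)}{S}-\frac{\phi'(v)^{2}-\phi'(u)^{2}}{S^{2}}\right).
\]
The diagonal $x=y=t$ corresponds to $u=0$, $v=t$, where $\phi(0)=1$, $\phi'(0)=0$, $\phi''(0)=-2m$, while $\phi'(t)=-2m\cos^{2m-1}t\,\sin t$ and $\phi''(t)=2m(2m-1)\cos^{2m-2}t\,\sin^{2}t-2m\cos^{2m}t$. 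Substituting these values, clearing the denominator $(1+\cos^{2m}t)^{2}$ and collecting terms, I expect to arrive at
\[
p_{\bw}(t)^{2}=\frac m2\cdot\frac{1-\cos^{4m-2}t+(2m-1)\cos^{2m-2}t\,\sin^{2}t}{(1+\cos^{2m}t)^{2}}.
\]
Finally, using $1-\cos^{4m-2}t=\sin^{2}t\sum_{k=0}^{2m-2}\cos^{2k}t$ and extracting the square root yields the claimed formula.

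The hard part is the algebraic simplification of the mixed derivative on the diagonal: several terms of degree up to $4m$ in $\cos t$ must cancel, and the signs in $\phi''$ are easy to mismanage. I would organize this by substituting $c=\cos^{2}t$ throughout, reducing the numerator to the polynomial identity
\[
\bigl((2m-1)c^{m-1}(1-c)-c^{m}+1\bigr)(1+c^{m})-2mc^{2m-1}(1-c)=1-c^{2m-1}+(2m-1)c^{m-1}(1-c),
\]
and invoking the geometric-series identity $(1-c)\sum_{k=0}^{2m-2}c^{k}=1-c^{2m-1}$ only at the very end. The binomial identity of the first step and the derivative computation of the second are elementary; the real content of the corollary is that the Bombieri weights are precisely those for which the sum telescopes into a power of $\cos(\theta/2)$.
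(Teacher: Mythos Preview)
Your argument is correct. Both you and the paper begin with the same key observation---that the Bombieri weights collapse the kernel to a sum of $2m$-th powers via the binomial identity $\sum_{j=0}^{2m}\binom{2m}{j}e^{\iu(j-m)\theta}=(2\cos\tfrac{\theta}{2})^{2m}$---so the conceptual content is identical. The execution, however, differs: the paper keeps the kernel in the complex form $\tilde K_{\bw}(x,y)=(1+e^{\iu(x+y)})^{2m}+(e^{\iu x}+e^{\iu y})^{2m}$, differentiates directly in $(x,y)$, and only at the end uses Euler's formula to eliminate the imaginary parts. You instead pass to the real form $2^{2m-2}\bigl(\cos^{2m}\tfrac{x-y}{2}+\cos^{2m}\tfrac{x+y}{2}\bigr)$ and change variables to $u=(x-y)/2$, $v=(x+y)/2$; since the kernel separates additively as $\phi(u)+\phi(v)$, the mixed derivative $\partial_x\partial_y=\tfrac14(\partial_v^2-\partial_u^2)$ reduces to one-variable second derivatives, and the diagonal evaluation $u=0$ kills half the terms immediately. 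Your route is a bit more structured and avoids complex arithmetic altogether, at the price of having to verify the polynomial identity in $c=\cos^2 t$ that you wrote down (which is indeed correct); the paper's route is more direct but carries heavier intermediate expressions. Either way the final simplification $1-\cos^{4m-2}t=\sin^2 t\sum_{k=0}^{2m-2}\cos^{2k}t$ is the same.
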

%Changing  the middle weight in the Bombieri norm simplifies the limit density even more.
%\begin{corollary}\label{0002}
	%For  
	%\begin{equation}\label{eq16}
	%\lambda_k=
	%\begin{cases}
	%\sqrt{\binom{2m}{k}},\quad k=0,\dots,m-1,m+1,\dots,2m,\\
	%2\sqrt{\binom{2m}{m}},\quad k=m,
	%\end{cases}
	%\end{equation}
	%we have
	%\[
	%p_{\bw}(t)=\sqrt{\frac{m}{2}}\cdot\frac{\left|\sin t\right|\,\sqrt{\sum\limits_{k=0}^{2m-2}\left(\cos t\right)^{2k}+(2m-1)\left(\cos t\right)^{2m-2}}}{\left(\cos t\right)^{2m}+1}.
	%\]
%\end{corollary}

\begin{figure}[h]
\begin{minipage}[h]{0.49\linewidth}
\center{\includegraphics[width=0.8\linewidth]{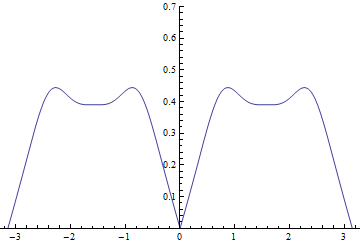} \\a}
\end{minipage}
\hfill
\begin{minipage}[h]{0.49\linewidth}
\center{\includegraphics[width=0.8\linewidth]{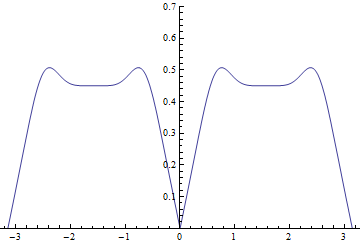} \\b}
\end{minipage}
\hfill
\hfill
\begin{minipage}[h]{0.49\linewidth}
\center{\includegraphics[width=0.8\linewidth]{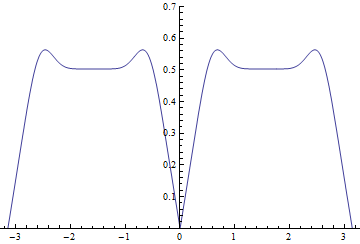} \\c}
\end{minipage}
\hfill
\begin{minipage}[h]{0.49\linewidth}
\center{\includegraphics[width=0.8\linewidth]{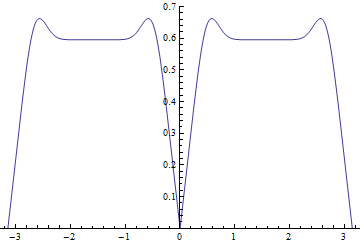} \\d}
\end{minipage}
\caption{A plot of  $p_{\bw}(t)$  with $\bw=\left(\binom{2m}{k}^{1/2}\right)_{k=0}^n$: a) $m=3$; b) $m=4$; c) $m=5$; d) $m=7$.}
\label{ris:image1}
\end{figure}

The second example is the Euclidean height.
\begin{corollary}\label{0004}
For $\bw=\left(1,\ldots,1\right)$ we have
\begin{align*}
p_{\bw}(t)&=\left(b_m+\frac{\sin(b_mt)}{\sin t}\right)^{-1}\cdot\Big(\frac{b_m\sin(b_mt)}{2(\sin t)^3}-\frac{b_m^2}{2}\frac{\cos(b_mt)\cos t}{(\sin t)^2}\\
&+\frac{\left(\sin(b_mt)\right)^2}{4(\sin t)^4}-\frac{b_m^3+2b_m}{6}\frac{\sin(b_mt)}{\sin t}-\frac{b_m^2}{4(\sin t)^2}+\frac{(m^2+m)b_m^2}{3}\Big)^{1/2},
\end{align*}
where $b_m=2m+1$.
\end{corollary}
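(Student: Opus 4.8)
The plan is to evaluate the general formula \eqref{eq15} directly in the special case $\lambda_0=\dots=\lambda_m=1$. With all weights equal to $1$ the function inside the logarithm is
\[
f(x,y):=\frac12+\sum_{k=1}^m\cos(kx)\cos(ky),
\]
and, writing $f_x,f_y,f_{xy}$ for its partial derivatives, the identity $\frac{\partial^2}{\partial x\partial y}\log f=\frac{f\,f_{xy}-f_xf_y}{f^2}$ gives $p_{\bw}(t)^2=\bigl(f\,f_{xy}-f_xf_y\bigr)/f^2$ evaluated at $x=y=t$. First I would differentiate term by term, obtaining $f_x=-\sum_{k=1}^m k\sin(kx)\cos(ky)$, $f_y=-\sum_{k=1}^m k\cos(kx)\sin(ky)$, $f_{xy}=\sum_{k=1}^m k^2\sin(kx)\sin(ky)$, and then restrict to the diagonal $x=y=t$. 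Using $\cos^2(kt)=\tfrac12(1+\cos 2kt)$, $\sin(kt)\cos(kt)=\tfrac12\sin 2kt$ and $\sin^2(kt)=\tfrac12(1-\cos 2kt)$, each of the diagonal values $f$, $f_x=f_y$ and $f_{xy}$ reduces to a linear combination of the three sums
\[
\sum_{k=1}^m\cos 2kt,\qquad \sum_{k=1}^m k\sin 2kt,\qquad \sum_{k=1}^m k^2\cos 2kt
\]
together with the constant $\sum_{k=1}^m k^2=\tfrac{m(m+1)(2m+1)}{6}$.

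The key step is to evaluate these three sums in closed form. Writing $b_m=2m+1$, the Dirichlet kernel identity $\sum_{k=-m}^m e^{2\iu kt}=\sin(b_mt)/\sin t$ yields $\sum_{k=1}^m\cos 2kt=\tfrac12\bigl(\sin(b_mt)/\sin t-1\bigr)$; differentiating this relation once and twice in $t$ expresses the remaining two sums through $g'$ and $g''$, where $g(t):=\sin(b_mt)/\sin t$. For $g''$ it is convenient to differentiate $g(t)\sin t=\sin(b_mt)$ twice, which gives $g''=(1-b_m^2)g-2g'\cot t$; together with the elementary $g'=\bigl(b_m\cos(b_mt)\sin t-\sin(b_mt)\cos t\bigr)/\sin^2 t$ this produces exactly the monomials $\sin(b_mt)/\sin t$, $\cos(b_mt)\cos t/(\sin t)^2$ and $\sin(b_mt)/(\sin t)^3$ appearing in the statement. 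Substituting back, the diagonal value of $f$ turns out to be $\tfrac14\bigl(b_m+\sin(b_mt)/\sin t\bigr)$, which already accounts for the prefactor: since $p_{\bw}^2=(f\,f_{xy}-f_xf_y)/f^2$ and $f^2=\tfrac1{16}\bigl(b_m+\sin(b_mt)/\sin t\bigr)^2$, it remains only to check that $16(f\,f_{xy}-f_xf_y)$ equals the expression under the square root in the corollary.

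Finally I would expand $16(f\,f_{xy}-f_xf_y)$ and collect terms. The products of the $t$-independent parts of $f$ and $f_{xy}$ give $\tfrac{(m^2+m)b_m^2}{3}$; the cross terms $\propto\sin(b_mt)\cos(b_mt)\cos t/(\sin t)^3$ arising from $f\,f_{xy}$ and from $-f_xf_y$ cancel; and after rewriting $\cos^2(b_mt)=1-\sin^2(b_mt)$ and $\cos^2 t=1-\sin^2 t$ all the $\bigl(\sin(b_mt)/\sin t\bigr)^2$ terms cancel as well, leaving precisely the six terms displayed, with the coefficient of $\sin(b_mt)/\sin t$ collapsing (using $m^2+m=(b_m^2-1)/4$) to $-\tfrac{b_m^3+2b_m}{6}$. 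I expect the only real obstacle to be this last piece of bookkeeping: one must track signs carefully, decide into which trigonometric monomial each contribution falls, and apply the identities that merge the $(\sin t)^{-2}$ and $(\sin t)^{-4}$ terms, so that the answer comes out in exactly the compact form stated. At the finitely many $t$ with $\sin t=0$ the formula is understood by continuous extension.
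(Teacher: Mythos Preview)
Your proposal is correct. Both you and the paper start from the same quotient-rule identity
\[
p_{\bw}(t)^2=\frac{K(t,t)\,\partial_{xy}K-\partial_xK\,\partial_yK}{K(t,t)^2}\Big|_{x=y=t},
\]
but you organize the computation differently. The paper first applies the product-to-sum identity $\cos(kx)\cos(ky)=\tfrac12\bigl(\cos k(x+y)+\cos k(x-y)\bigr)$ to rewrite the two-variable kernel as a sum of two Dirichlet kernels,
\[
K_{\bw}(x,y)=\frac{\sin\bigl((m+\tfrac12)(x+y)\bigr)}{4\sin\tfrac{x+y}{2}}+\frac{\sin\bigl((m+\tfrac12)(x-y)\bigr)}{4\sin\tfrac{x-y}{2}},
\]
and then differentiates this closed form, handling the apparent singularity at $x=y$ via the power-series expansions of $\sin\tfrac{N(x-y)}{2}$ and $\cos\tfrac{N(x-y)}{2}$. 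You instead differentiate the finite sum term by term \emph{before} summing, restrict to the diagonal, reduce everything via double-angle formulas to the three one-variable sums $\sum\cos 2kt$, $\sum k\sin 2kt$, $\sum k^2\cos 2kt$, and evaluate these as $g$, $-\tfrac14 g'$, $-\tfrac18 g''$ from the single Dirichlet identity $g(t)=\sin(b_mt)/\sin t$. Your route sidesteps the $x\to y$ limit entirely and packages the second derivative through the neat ODE $g''=(1-b_m^2)g-2g'\cot t$; the paper's route has the advantage of a compact closed form from the outset but pays for it with the removable-singularity bookkeeping. Either way one lands on $K_{\bw}(t,t)=\tfrac14\bigl(b_m+\sin(b_mt)/\sin t\bigr)$ and the same numerator, so the final algebra that collapses the coefficient of $\sin(b_mt)/\sin t$ to $-\tfrac{b_m^3+2b_m}{6}$ is shared.
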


\begin{figure}[h]
\begin{minipage}[h]{0.49\linewidth}
\center{\includegraphics[width=0.8\linewidth]{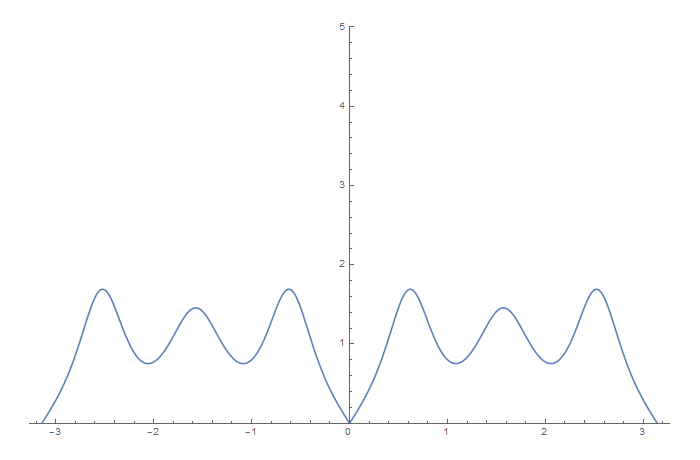} \\a}
\end{minipage}
\hfill
\begin{minipage}[h]{0.49\linewidth}
\center{\includegraphics[width=0.8\linewidth]{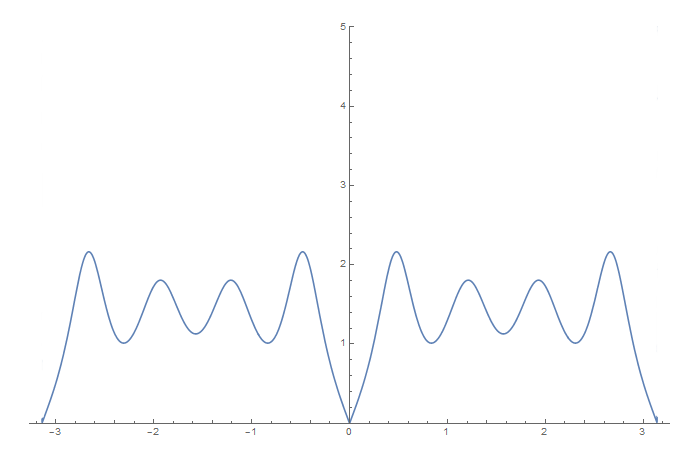} \\b}
\end{minipage}
\hfill
\hfill
\begin{minipage}[h]{0.49\linewidth}
\center{\includegraphics[width=0.8\linewidth]{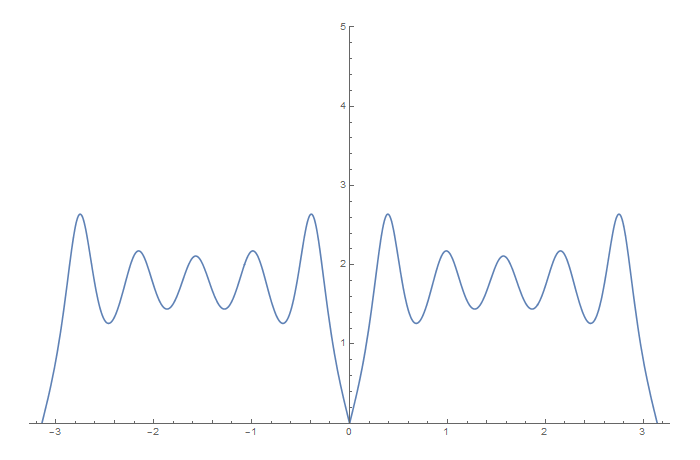} \\c}
\end{minipage}
\hfill
\begin{minipage}[h]{0.49\linewidth}
\center{\includegraphics[width=0.8\linewidth]{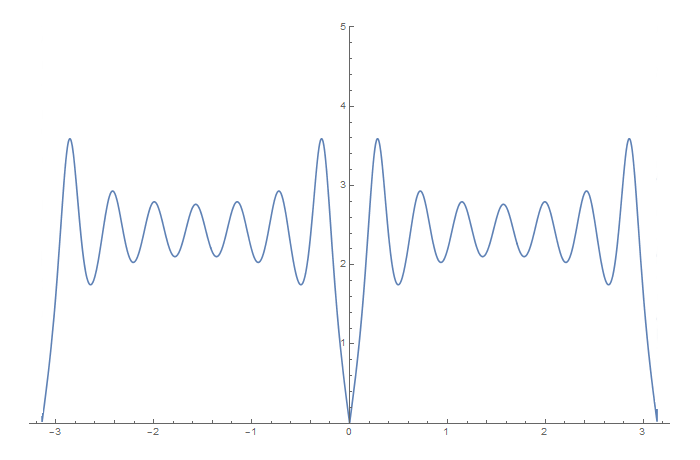} \\d}
\end{minipage}
\caption{A plot of  $p_{\bw}(t)$ with  $\bw=\left(1,\ldots,1\right)$: a) $m=3$; b) $m=4$; c) $m=5$; d) $m=7$.}
\label{ris:image2}
\end{figure}

\bigskip

As was mentioned above, the results on \tc{the} distribution of the roots of random algebraic polynomials were used in~\cite{GKZ15},~\cite{GKZ17} to study the asymptotic behavior of algebraic numbers in a domain of the complex plane. In this paper we show that the distribution of algebraic numbers on the unit circle 
%is
{can be} described in terms of random \emph{trigonometric} polynomials. Namely, 
%it will be seen
{one can easily see} from the proof of Theorem~\ref{0001} that
\[
\E \mu_F([\beta_1,\beta_2])=\frac{1}{\pi}\int\limits_{\beta_1}^{\beta_2}\rho_\lambda(t) \dd t,
\]
{where  
\[
F(\theta):=\frac{\lambda_m}2\eta_m+\frac{1}{\sqrt2}\sum\limits_{k=1}^m \lambda_{m-k}\eta_{m-k}\,\cos(k\theta)
\]
is a random trigonometric polynomial with coefficients $\eta_0,\ldots, \eta_m$ being i.i.d. real-valued  standard Gaussian variables and $\mu_F([\beta_1,\beta_2])$ denotes the number of the  real roots of $F$ lying in the interval $[\beta_1,\beta_2]$.}
%where  $\mu_F([\beta_1,\beta_2])$ denotes the number of the  real roots  in $[\beta_1,\beta_2]$ of the random trigonometric polynomial 
%\[
%F(\theta):=\frac{\lambda_m}2\eta_0+\sqrt2\sum\limits_{k=1}^m \lambda_{m-k}\eta_k\,\cos(k\theta),
%\]
%where  $\eta_0,\ldots, \eta_m$ are i.i.d. real-valued  standard Gaussian variables. 
Thus, up to a factor $\pi$, the limit density $p_{\bw}$ coincides with the density of the roots of $F$.

%
%\section{Zeros of random trigonometric polynomials}
%Let $\xi_0,\ldots, \xi_n$ be i.i.d. real-valued  standard Gaussian variables. Consider a random trigonometric polynomial $F(\theta), t\in[-\pi,\pi],$ defined as
%\[
%F(\theta):=\frac12\xi_0+\sum\limits_{k=1}^m \eta_k\,\cos{k\theta}.
%\]
%For $-\pi\leq\beta_1<\beta_2\leq\pi$ denote by $\mu(\beta_1,\beta_2)$ the number of real roots of $F$ in $[\beta_1,\beta_2]$. It f ollows from the Edelman-Kostlan formula (see~\cite[Theorem~3.1]{EK95}) that
%\[
%\E \mu(\beta_1,\beta_2)=\frac{1}{\pi}\int_{\beta_1}^{\beta_2}\rho_\lambda(t) \dd t,
%\]
%where $\rho_\lambda$ is defined in~\eqref{eq15}.

\section{Proof of Theorem~\ref{0001}}\label{0021}
As \tc{argued} in Section~\ref{0913} we can restrict our attention to the following subclass of symmetric polynomials of even degree $n=2m$:
\[
\mathcal{P}_{\mathrm{sym}}(Q):=\bigg\{P\in\mathcal{P}(Q)\colon P(t)=\sum\limits_{k=0}^{2m}a_kt^k,\,a_k=a_{2m-k}\bigg\}.
\]
Moreover let $\mathcal{P}^*_{\mathrm{sym}}(Q)$ denote the subclass of prime polynomials from $\mathcal{P}_{\mathrm{sym}}(Q)$:
\[
\mathcal{P}^*_{\mathrm{sym}}(Q):=\mathcal{P}_{\mathrm{sym}}(Q)\cap\mathcal{P}^*(Q).
\]
Given a function $g:\mathbb{C}\rightarrow\C$ and a  subset $B\subset\mathbb{C}$ denote by $\mu_g(B)$ the number of the roots of  $g$ lying in $B$. For $-\pi\leq\beta_1<\beta_2\leq\pi$ denote by $\T_{\beta_1,\beta_2}$ the arc
\[
\T_{\beta_1,\beta_2}:=\left\{z\in\T\colon \arg z\in[\beta_1,\beta_2]\right\}.
\]
Since $\mathcal{P}_{\mathrm{sym}}^{*}(\h)$ is the set of all minimal polynomials for algebraic numbers $\alpha\in \T$ with $\deg\alpha=2m$ and $h_{\bw}\left(\alpha\right)\leq \h$, we have
\[
\Phi_{\beta_1,\beta_2}(Q)=\sum\limits_{P\in\mathcal{P}_{\mathrm{sym}}^{*}(\h)}\mu_P(\T_{\beta_1,\beta_2})
\]
and since $\mu_P(\T_{\beta_1,\beta_2})\leq 2m$, we can write
\begin{equation}\label{1950}
\Phi_{\beta_1,\beta_2}(Q)=\sum\limits_{l=0}^{2m}l\,\#\{P\in\mathcal{P}_{\mathrm{sym}}^{*}(\h):\,\mu_P(\T_{\beta_1,\beta_2})=l\}.
\end{equation}

Thus our aim is to estimate the number of the prime symmetric polynomials \tc{having
%with 
a prescribed number of roots} in a {given} set. 
%The polynomials correspond to 
\tc{Since these polynomials can be 
	identified %viewed as} 
	with the vectors of their coefficients}, our first step is to find a way of counting integer points in multidimensional regions.

For a Borel set $A\subset\R^d$ denote by $\gamma(A)$ (resp.  $\gamma^*(A)$) the number of points in $A$ with integer  (resp. co-prime integer) coordinates. For a real number $r$ {define the dilated set} %denote
\[
rA:= \{r\bx : \bx\in A \}.
\]

%Estimating the number of integer points in a region  by its measure  is a well-known idea. The first reference (that relates integer point counting to the volume of a region) which we are aware of is a result by Lip\-schitz~\cite{rL65} and Davenport \cite{hD51}. Some classical literature here is \cite{sL70}, \cite{pS95}.

For our purposes we would like to know the asymptotic behavior of the quantity $\gamma^*(\h A)$ as $\h\rightarrow\infty$. 
%For that we need 
\tc{In order to get a good estimate one should} impose some regularity conditions on the boundary $\partial A$ of $A$. According to~\cite[Definition 2.2]{mW12}, we say that  $\partial A\subset\R^d$ is of Lipschitz class $(M,L)$  if there exist $M$ maps
$\phi_1, \dots, \phi_M : [0, 1]^{d-1} \to\R^d$ satisfying a Lipschitz condition
\[
|\phi_i(x) - \phi_i(y)| \le L|x - y| \text{ for } x, y \in [0, 1]^{d-1}, \quad  i = 1, \dots,M,
\]
such that $\partial A$ is covered by the images of the maps $\phi_i$. 

The next lemma 
%finds
{provides} the asymptotic of $\gamma^*(\h A)$ as $\h\rightarrow\infty$.

\begin{lemma}\label{lm-prim-pnt}
	Consider a bounded region $A\subset\mathbb{R}^d$, $d\geq2$, such that the boundary $\partial A$ of $A$ is of Lipschitz class $(M,L)$. 
	Then $A$ is Lebesgue measurable and 
	\[
	\left|\frac{\lambda^*(QA)}{\h^d}-\frac{\Vol(A)}{\zeta(d)}\right|\leq C\frac{\log^{\chi(d)}\h}{\h},
	\]
	where  $C$ depends on $d,L,M$ only and 
	\begin{equation}\label{2151}
\chi(d):=\left\{
	\begin{array}{ll}
	1, & d=2; \\
	0, & \text{otherwise}.
	\end{array}
	\right.
	\end{equation}
\end{lemma}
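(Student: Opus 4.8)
The plan is to count lattice points with coprime coordinates in the dilated region $QA$ by first counting \emph{all} lattice points in $QA$ (with no coprimality constraint) and then removing, via a M\"obius--type sieve, those whose gcd exceeds one.

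\medskip

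\textbf{Step 1: All lattice points.} First I would show that $\gamma(QA)=Q^d\Vol(A)+O(Q^{d-1})$, with the implied constant depending only on $d,L,M$. This is a standard consequence of the Lipschitz boundary hypothesis: the boundary $\partial(QA)=Q\,\partial A$ is covered by $M$ images of maps whose Lipschitz constant is $QL$, so $\partial(QA)$ can be covered by $O((QL)^{d-1})$ unit cubes; every unit cube meeting $QA$ but not wholly inside it meets $\partial(QA)$, and comparing the count of lattice points with the volume gives an error bounded by the number of such boundary cubes, i.e.\ $O(Q^{d-1})$. In particular $A$ is Jordan (hence Lebesgue) measurable, since its boundary has Lebesgue measure zero. (This is exactly the estimate made available by invoking \cite[Definition 2.2]{mW12}; I would cite the corresponding counting lemma there rather than reprove it in detail.)

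\medskip

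\textbf{Step 2: M\"obius sieve.} A lattice point $\bx\in QA$ has coprime coordinates iff no prime divides all its coordinates, so by inclusion--exclusion
\[
\gamma^*(QA)=\sum_{k=1}^{\infty}\mu(k)\,\gamma\!\left(\tfrac{Q}{k}A\right),
\]
the sum being finite because $\gamma((Q/k)A)=0$ once $Q/k$ is smaller than the reciprocal of the diameter of $A$ scaled appropriately — more precisely, once $k>c_A Q$ for a constant $c_A$ depending on $A$ (here one uses boundedness of $A$; if $0\in A$ one truncates at $k\asymp Q$, and in general the same holds after noting $\gamma((Q/k)A)$ can only be nonzero for $k\lesssim Q$). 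Substituting the Step 1 asymptotics,
\[
\gamma^*(QA)=\Vol(A)\,Q^d\sum_{k\le c_A Q}\frac{\mu(k)}{k^d}
+O\!\left(Q^{d-1}\sum_{k\le c_A Q}\frac{1}{k^{d-1}}\right).
\]
The main term tends to $\Vol(A)Q^d/\zeta(d)$ since $\sum_k\mu(k)k^{-d}=1/\zeta(d)$, with tail $\sum_{k>c_AQ}\mu(k)k^{-d}=O(Q^{-(d-1)})$ for $d\ge 2$. For the error term: when $d\ge 3$ the sum $\sum_k k^{-(d-1)}$ converges, giving $O(Q^{d-1})$, i.e.\ relative error $O(Q^{-1})$; when $d=2$ the sum is $\sum_{k\le c_AQ}k^{-1}=O(\log Q)$, giving $O(Q\log Q)$ and relative error $O(Q^{-1}\log Q)$. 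This is precisely the dichotomy recorded by $\chi(d)$ in \eqref{2151}. Dividing by $Q^d$ yields the claimed bound.

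\medskip

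\textbf{Main obstacle.} The only genuinely delicate point is making Step 1 uniform in the dilation factor and keeping the constant dependent on $d,L,M$ alone — one must track that rescaling $A$ by $Q$ multiplies the Lipschitz constant by $Q$ but does not change $M$ or $d$, so the boundary-cube count is $O_{d,L,M}(Q^{d-1})$ with no hidden dependence on $A$ itself. A secondary bookkeeping issue is justifying the truncation of the M\"obius sum at $k\asymp Q$ and bounding the resulting tails; this is routine given boundedness of $A$ but should be stated carefully so that the error terms genuinely have the form $C\log^{\chi(d)}Q/Q$ after division by $Q^d$. Everything else is the classical lattice-point-counting-plus-M\"obius argument.
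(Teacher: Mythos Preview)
Your argument is the standard and correct one: a Davenport--Widmer lattice-point estimate for Lipschitz-bounded regions, followed by a M\"obius sieve over the gcd, with the $d=2$ versus $d\ge 3$ dichotomy coming from the (non)convergence of $\sum_k k^{-(d-1)}$. The paper does not give an independent proof of this lemma at all; it simply cites \cite[Lemma~6.3]{GKZ17}, and what you have written is essentially the proof one finds there (and in the literature it descends from), so there is nothing to compare.

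One small remark on the point you yourself flag as the ``main obstacle'': to get the final constant to depend only on $d,L,M$ you also need that $\Vol(A)$ and the truncation level $c_A Q$ are controlled by $d,L,M$ alone. This does hold, because a boundary of Lipschitz class $(M,L)$ forces both the diameter and the volume of $A$ to be bounded in terms of $d,L,M$ (each of the $M$ parametrising images has diameter $O(L)$, and the resulting bound on the $(d-1)$-content of $\partial A$ controls $\Vol(A)$); you might make this explicit so that the tail estimate $\Vol(A)\,Q^d\sum_{k>c_A Q}k^{-d}=O(Q)$ visibly has the right dependence.
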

\begin{proof}
	See, e.g.,~\cite[Lemma~6.3]{GKZ17}.
\end{proof}

%{Few words about the proof?}
%
%The following lemma gives us the required information.
%
%\begin{lemma}\label{002}
%	Consider a region $A\subset\mathbb{R}^d$, $d\geq2$, with boundary consisting of $m$ algebraic surfaces of degree at most $m'$. Then
%	\begin{equation}\label{eq5}
%	\gamma^*(QA)=\frac{\Vol_d(A)}{\zeta(d)}\,\h^d+O\left(\h^{d-1}\,\log^{l(d)}\h\right),\quad \h\rightarrow\infty,
%	\end{equation}
%	where the implicit constant in the big-O-notation depends on $d$, $m$, $m'$ only  and $l(d)$ is defined in \eqref{eq4}.
%\end{lemma}

%
%Results of this type are well-known, see e.g. the  classical monograph by Bachmann \cite[pp. 436--444]{pB94} (in particular, formulas~(83a) and~(83b) on pages 441--442). There are a number of papers generalizing such estimates to arbitrary lattices, see e.g. \cite{MV07} and \cite{BW14}.
%
%The basic ingredient of the proof of Lemma~\ref{002} is the classical M\"obius inversion formula (see, e.g., \cite{hR77}) and Lipschitz principle (see~\cite{hD51}). The error term in~\eqref{eq5} is defined by error terms in Lipschitz principle, which can be made  smaller for special type of regions only (see~\cite{mSkr98}), which is not our case.
%
%For the detailed proof of Lemma~\ref{002}, see~\cite{GKZ15}.
%
%
% Our first step is to estimate this quantity with the number of 
%
%
%
%
%
%
%
%For a Borel set $A\subset\R^d$ denote by $\gamma(A)$ the number of points in $A$ with integer coordinates, and by $\gamma^*(A)$ the number of points in $A$ with coprime integer coordinates. For a real number $r$ and a set $S\subset \mathbb{R}^d$ denote
%\[
%rS = \{r\bx : \bx\in S \}.
%\]

For $l=0,1,\dots, 2m$ denote by $A_l\subset\R^{m+1}$ the set of points $(a_0,\ldots,a_m)$ such that the polynomial $P(t)=a_0t^{2m}+\ldots+a_mt^{m}+\ldots+a_0$ satisfies $\mu_P(\T_{\beta_1,\beta_2})=l$ and $h_{\bw}\left(P\right)\leq 1$. The latter condition is equivalent to $(a_0,\ldots,a_m)\in\mathcal{E}_\bw$, where $\mathcal{E}_\bw\subset\R^{m+1}$ is the ellipsoid defined by
\[
\mathcal{E}_\bw:=\bigg\{(a_0,\ldots,a_m)\in\R^{m+1}:\frac{a_m^2}{\lambda_m^2}+2\sum\limits_{k=0}^{m-1}\frac{a_k^2}{\lambda_k^2}\leq1\bigg\}.
\]
Note that
\[
\Vol(\mathcal{E}_\bw)=2^{-m/2}\lambda_0\dots\lambda_m\Vol(\B^{m+1}).
\]
Then by definition of a primitive polynomial we have
\[
\gamma^*(\h A_l)=\#\{P\in\mathcal{P}_{\mathrm{sym}}(\h):P\text{ is primitive},\,\mu_P(\T_{\beta_1,\beta_2})=l\},
\]
which implies
\begin{equation}\label{eq12}
\Big|\frac12\,\gamma^*(\h\,A_l)-\#\{P\in\mathcal{P}_{\mathrm{sym}}^{*}(\h):\,\mu_P(\T_{\beta_1,\beta_2})=l\}\Big|\leq r(\h).
\end{equation}
Here, $r(\h)$ denotes the number of the   polynomials reducible in  $\mathcal{P}_{\mathrm{sym}}(\h)$ (i.e., {can be represented} as a product of two symmetric integral polynomials of positive degree). The  factor $1/2$ in~\eqref{eq12} is due to the positiveness of the leading coefficient of a prime polynomial. 

Thus, our next step is to estimate $\gamma^*(\h\,A_l)$ and $r(\h)$. The latter is established  in the following lemma.
%The task is now to estimate the value $R_{m,2,\bw}(\h)$. There exists an asymptotic formula for the number of reducible polynomials with degree $n$ and bounded na\"{\i}ve height (see~\cite{bW36}, \cite{Dub14}, \cite{Ku09})
%\begin{equation}\label{eq6}
%\#\{P\in\mathcal{P}_{\infty,\bw^1}(\h)\colon P\text{ is reducible}\}=O\left(\h^n\log^{l(n)}\h\right),\quad \h\to\infty,
%\end{equation}
%where $\bw^1=(1,\ldots,1)$. 
%
%We use the method of \cite{Ku09} to prove the lemma below. Before we start we recall the definition of Vinogradov symbol $A\ll_{s_1,s_2,\ldots} B$, which means that there exists a constant $c(s_1,s_2,\ldots)>0$ such that $A\leq c\cdot B$.
\begin{lemma}\label{003}
For some $C>0$ depending on $m,\lambda$ only we have
\[
r(\h)\leq C\cdot\h^m\,\log^{\chi(m)}\h,
\]
where $\chi(\cdot)$ is defined in (\ref{2151}). 
\end{lemma}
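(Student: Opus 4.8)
The plan is to bound the number $r(Q)$ of reducible polynomials in $\mathcal{P}_{\mathrm{sym}}(Q)$ by decomposing such a polynomial into symmetric integral factors of smaller (even) degree and counting the possible factor pairs via a volume (lattice-point) estimate. Write $P = P_1 P_2$ where each $P_i \in \Z[t]$ is symmetric of positive degree; by the parity argument from Section~\ref{0913} (irreducible factors of a symmetric polynomial without roots at $\pm 1$ themselves come in reciprocal pairs, and any product of such a pair is again symmetric), we may assume each $P_i$ is symmetric of even degree $2m_i$ with $m_1 + m_2 = m$, $m_i \geq 1$ — the boundary cases where $\pm 1$ is a root contribute only $O(Q^{m})$ or less and are handled separately. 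The key point is that $h_{\bw}(P) \leq Q$ controls the sizes of the coefficients of $P$, hence (by Gelfond's inequality / Mahler-measure bounds comparing the height of a product to the heights of its factors) it controls the $\ell^\infty$-norms of $P_1$ and $P_2$ up to a constant depending on $m$ and $\bw$: say $\|P_i\|_\infty \leq c\, Q$ for some $c = c(m,\bw)$.

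The counting then proceeds as follows. First I would fix the degree split $(m_1, m_2)$; there are only finitely many such splits (at most $m-1$), so it suffices to bound the count for each. A symmetric polynomial of degree $2m_i$ is determined by $m_i + 1$ integer coefficients, so the number of candidate factors $P_i$ with $\|P_i\|_\infty \leq cQ$ is $O(Q^{m_i + 1})$. Naively multiplying gives $O(Q^{m_1 + m_2 + 2}) = O(Q^{m+2})$, which is too weak by a factor $Q^2$. To recover the correct exponent $Q^m$, I would use that once the \emph{product} $P = P_1 P_2$ is fixed there are only boundedly many factorizations (the factors are determined up to finitely many choices by the roots of $P$), so instead one counts: the number of pairs is at most the number of \emph{products}, and a product $P \in \mathcal{P}_{\mathrm{sym}}(Q)$ lies in a region whose integer-point count is $O(Q^{m+1})$. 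This still loses one power. The standard fix — and the one I expect the authors use — is to observe that if $P_1$ is the factor of smaller degree, say $2m_1 \le m$, then $P_1$ ranges over $O(Q^{m_1+1})$ choices with $m_1 + 1 \le \lceil m/2\rceil + 1$, while for each fixed $P_1$ the cofactor $P_2 = P/P_1$ is an integral polynomial and the constraint that $P_1 \mid P$ in $\Z[t]$ together with $h_{\bw}(P)\le Q$ cuts the count of $P_2$ down; more robustly, one counts the pair $(P_1, P)$ directly with $P \in \h\El_{\bw}'$ (the dilated ellipsoid) lying on the subvariety $\{P_1 \mid P\}$, which for fixed $P_1$ is a lattice of rank $m_2+1$ intersected with a ball of radius $O(Q)$, giving $O(Q^{m_2+1})$; summing over the $O(Q^{m_1+1})$ values of $P_1$ with $m_1 \ge 1$ gives $O(Q^{m_1 + m_2 + 2})$ again. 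The genuine extra saving comes from primitivity: we are counting \emph{primitive} reducible $P$, and up to bounded factors we may take $P_1, P_2$ primitive; the real improvement is that $P_1$ has $\ge 1$ as its degree so $m_1 + 1 \ge 2$, hence we may assume the \emph{linear-in-$Q$} reciprocal-pair factor of degree $2$ is split off, reducing to degree $2(m-1)$ — iterating, $r(Q) = O(Q^{m-1})\cdot O(Q) \cdot (\log Q)^{?}$, and the $\log$ appears exactly when $m = 2$ (i.e. $d = m = 2$ in Lemma~\ref{lm-prim-pnt}), matching $\chi(m)$.

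Concretely, here is the clean route I would actually write: bound $r(Q) \le \sum_{m_1=1}^{m-1} N(m_1)$, where $N(m_1)$ counts primitive symmetric $P = P_1 P_2$ with $\deg P_1 = 2m_1$. For each such factorization pick the factor of degree $2$ (possible since $m \ge 2$; if $m=1$ there are no nontrivial symmetric factorizations into even-degree pieces and $r(Q) = O(Q\log Q)$ trivially, absorbed into the bound), i.e. $P = (t^2 - ct + 1)\,R(t)$ with $c \in \Z$, $|c| \le 2$ forced if we insisted on unit-circle roots but in general $|c| = O(Q^{o(1)})$; actually $|c| \le 2\|P\|_\infty^{1/?}$ — here I would simply use $|c| \le cQ$ and $R \in \mathcal{P}_{\mathrm{sym}}$ of degree $2m-2$ with $\|R\|_\infty \le c'Q$, so the count of pairs $(c, R)$ is $O(Q) \cdot O(Q^{m-1})$? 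That is $O(Q^m)$, not $O(Q^{m-1})$ — the $\log$ then enters when the $(m-1)$-dimensional lattice-point count for $R$ picks up its $\log$, which by Lemma~\ref{lm-prim-pnt} happens at dimension $d = m - 1 + 1 = m$... wait, at $d=2$, i.e. $m=2$. \textbf{Main obstacle.} Getting the exponent exactly right — showing $r(Q) = O(Q^m (\log Q)^{\chi(m)})$ and not $O(Q^{m+1})$ — hinges on a careful bookkeeping of \emph{which} factor's coefficients are "free" versus "determined": the honest argument fixes the smaller-degree factor $P_1$ (with coefficient vector in a ball of radius $O(Q)$ in $\R^{m_1+1}$, hence $O(Q^{m_1+1})$ choices by Lemma~\ref{lm-prim-pnt} or a trivial box count) and then, crucially, bounds the number of \emph{cofactors} $P_2$ not by $Q^{m_2+1}$ but by using that $P = P_1 P_2$ is constrained to the ellipsoid $Q\El_{\bw}$ and the map $P_2 \mapsto P_1 P_2$ is linear injective, so the image is a rank-$(m_2+1)$ lattice in $\R^{m+1}$ whose intersection with $Q\El_{\bw}$ has $O(Q^{m_2+1})$ points — and the point is that $m_1 + 1 + m_2 + 1 = m + 2$ still, so this alone does not suffice and one must additionally exploit that $P_1$ itself only ranges over $O(Q^{m_1})$ values once we quotient by the primitivity/leading-coefficient normalization, or invoke that a product $P_1P_2$ with $\|P_1P_2\|_\infty \le Q$ forces $\|P_1\|_\infty \|P_2\|_\infty \le 2^{2m} Q$ (an $\ell^1$–$\ell^\infty$ product inequality for polynomial coefficients), so summing $\sum_{\|P_1\|_\infty = N} (Q/N)^{m_2+1}$ over dyadic $N \le Q$ telescopes to $O(Q^{\max(m_1, m_2)+1} \log Q)$ — and since $\max(m_1,m_2) \le m-1$, this is $O(Q^m \log Q)$, with the $\log$ removable for $m \ge 3$ because the dyadic sum then converges geometrically rather than logarithmically. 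This last dyadic-summation step, combined with the Gelfond-type product inequality and the reduction to $\max(m_1,m_2) \le m-1$, is the crux; everything else (the parity reduction, discarding $\pm1$-root cases, finitely many degree splits) is routine.
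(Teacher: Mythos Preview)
Your final observation is the right one and is exactly what the paper does: the Gelfond-type inequality $H(P_1)H(P_2)\le c_m\,H(P_1P_2)$ (the paper cites \cite[Theorem~4.2.2]{Prasolov}) reduces the problem to counting pairs $(P_1,P_2)$ of symmetric integral polynomials with $\deg P_1+\deg P_2=2m$ and $H(P_1)H(P_2)\le cQ$, and then one bounds
\[
\sum_{k=1}^{m-1}\ \sum_{\substack{x,y\ge 1\\ xy\le Q}} x^{k}y^{m-k}\ \ll\ Q^{m}\log^{\chi(m)}Q
\]
directly (the paper cites \cite[eq.~(3.2)]{Ku09}); the passage between $h_\bw$ and the na\"\i ve height $H$ is just $H\ll h_\bw\ll H$. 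Everything prior to that in your write-up --- the naive $O(Q^{m+2})$ pair count, the attempt to split off a degree-$2$ factor, the cofactor-lattice discussion --- is dead-ended and should be deleted; the paper's proof is a few lines once the product-height inequality is stated.

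One small correction to your last sentence: your explanation for why the $\log$ disappears when $m\ge 3$ (``the dyadic sum then converges geometrically'') is not right for the split $m_1=m_2=m/2$ (possible when $m$ is even), where the dyadic sum is still logarithmic. The actual reason is that in that case $\max(m_1,m_2)+1=m/2+1<m$, so the contribution $Q^{m/2+1}\log Q$ is absorbed into $Q^m$. The only splits with $\max(m_1,m_2)+1=m$ are $\{m_1,m_2\}=\{1,m-1\}$, which for $m\ge 3$ have $m_1\ne m_2$ and hence no logarithm.
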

The proof of  Lemma~\ref{003} is postponed to Section~\ref{2349}. 
%To
{In order to} estimate $\gamma^*(\h\,A_l)$, we are going to apply Lemma~\ref{lm-prim-pnt}. 
%For that
{Before we do it}, we need to check that the boundary of $A_l$ is of Lipschitz class.
\begin{lemma}\label{000}
{For any $0\leq l\leq m$, there exist  constants $M$, $L$ depending on $m,\bw$ only such that the boundary $\partial A_l$ is of Lipschitz class $(M,L)$. }%The boundary $\partial A_l$ is of Lipschitz class $(M,L)$ for some constants $M$, $L$ depending on $m,\bw$ only.
\end{lemma}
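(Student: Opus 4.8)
The plan is to exhibit $\partial A_l$ as a finite union of pieces, each of which is the graph of a Lipschitz function over a coordinate box, with the number of pieces and the Lipschitz constant controlled in terms of $m$ and $\bw$. First I would observe that $A_l$ sits inside the fixed bounded ellipsoid $\mathcal{E}_\bw$, so it suffices to understand how the count $\mu_P(\T_{\beta_1,\beta_2})$ can change as the coefficient vector $\mathbf{a}=(a_0,\dots,a_m)$ moves. Writing $z=e^{\iu\theta}$ and using $a_k=a_{2m-k}$, the polynomial $P(e^{\iu\theta})$ equals $e^{\iu m\theta}$ times the real trigonometric polynomial
\[
F_{\mathbf{a}}(\theta):=a_m+2\sum_{k=1}^{m}a_{m-k}\cos(k\theta),
\]
so $\mu_P(\T_{\beta_1,\beta_2})$ is exactly the number of zeros of $F_{\mathbf{a}}$ in $[\beta_1,\beta_2]$ (counted appropriately at the endpoints). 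Hence the boundary $\partial A_l$ is contained in the union of the boundary of $\mathcal{E}_\bw$ and the ``bifurcation set'' $\Sigma$ of coefficient vectors $\mathbf{a}$ for which $F_{\mathbf{a}}$ has a zero at $\beta_1$ or $\beta_2$, or a multiple zero somewhere in $(\beta_1,\beta_2)$.

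Next I would treat these pieces. The sphere $\partial\mathcal{E}_\bw$ is plainly of Lipschitz class (it is a fixed smooth compact hypersurface, coverable by finitely many coordinate graphs with a Lipschitz constant depending only on $\bw$ and $m$). For the endpoint components, $\{\mathbf{a}:F_{\mathbf{a}}(\beta_i)=0\}$ is an affine hyperplane in $\R^{m+1}$ (its coefficients $1,2\cos(k\beta_i)$ are fixed and not all zero), so its intersection with $\mathcal{E}_\bw$ is again of Lipschitz class with constants depending only on $m,\bw$. The essential component is the set where $F_{\mathbf{a}}$ has a double root: this is the image under a fixed polynomial map of the variety $\{(\theta,\mathbf{a}):F_{\mathbf{a}}(\theta)=F_{\mathbf{a}}'(\theta)=0\}$. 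Eliminating $\theta$ (equivalently, passing to the discriminant of the degree-$2m$ polynomial $t^{m}P(t^{-1})\cdot\!$ rewritten in $\cos\theta$, or using resultants) shows $\Sigma\cap\mathcal{E}_\bw$ is a subset of a real algebraic hypersurface of degree bounded in terms of $m$ only. Then I would invoke a standard fact — a compact piece of a real algebraic set of bounded degree in $\R^{d}$ is of Lipschitz class $(M,L)$ with $M,L$ depending only on $d$ and the degree — for instance via the cylindrical algebraic decomposition or the argument in~\cite{mW12}; this gives the bound for $\Sigma$ and completes the proof, since a finite union of sets of Lipschitz class is again of Lipschitz class with the obvious additive bound on $M$.

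The main obstacle is making the double-root component rigorous and uniform: one must check that the discriminant of $F_{\mathbf{a}}$ (as a polynomial in $\cos\theta$, with coefficients depending polynomially on $\mathbf{a}$) is not identically zero on $\mathcal{E}_\bw$, so that $\Sigma$ is genuinely a hypersurface rather than all of $\R^{m+1}$, and that its degree is bounded purely in terms of $m$. The first point follows because for generic $\mathbf{a}$ the trigonometric polynomial $F_{\mathbf{a}}$ has only simple zeros, so the discriminant is a nonzero polynomial; the degree bound is immediate from the fact that $F_{\mathbf{a}}$ has degree $m$ in $\cos\theta$ with coefficients linear in $\mathbf{a}$, so its discriminant has degree $2m-2$ in $\mathbf{a}$. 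A secondary, more bookkeeping-type issue is handling the endpoints $\beta_1,\beta_2$ and the possibility that a zero enters or leaves the interval through an endpoint — but this is precisely why the two affine hyperplanes are included in the decomposition, and no further subtlety arises.
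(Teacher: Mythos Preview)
Your decomposition of $\partial A_l$ into (a) $\partial\mathcal{E}_\bw$, (b) the two endpoint hyperplanes $\{F_{\mathbf a}(\beta_i)=0\}$, and (c) the double-root locus is exactly the paper's decomposition, and your treatment of (a) and (b) matches the paper's. The difference lies in (c). The paper does not pass to the discriminant; instead it exploits directly the parametric description you yourself wrote down, $\{(\theta,\mathbf a):F_{\mathbf a}(\theta)=F_{\mathbf a}'(\theta)=0\}$. Since the coefficient of $a_{m-1}$ in $F_{\mathbf a}'(\theta)$ is $-2\sin\theta\ne 0$ (away from $\theta=0,\pm\pi$, which are handled trivially), one solves the linear system for $a_{m-1}$ and $a_m$ in terms of $(\theta,a_0,\dots,a_{m-2})$ and obtains a single explicit $C^1$ map $\phi:[0,1]^m\to\R^{m+1}$ whose image covers $A''$; its Lipschitz constant is read off directly and depends only on $m$ and $\max_i\lambda_i$. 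Your route---eliminate $\theta$, land on an algebraic hypersurface, and then invoke a general fact that bounded pieces of real algebraic sets are of Lipschitz class---is correct in principle (it follows, e.g., from semi-algebraic triangulation), but it imports heavier machinery than needed and your uniformity claim ``$M,L$ depend only on $d$ and the degree'' is stated a bit too strongly: for a generic algebraic hypersurface the constants also depend on the coefficients of the defining polynomial, and what actually saves you here is that the discriminant is a single \emph{fixed} polynomial determined by $m$, intersected with the fixed ellipsoid $\mathcal{E}_\bw$. The paper's parametrization buys you an elementary, self-contained argument with visibly uniform constants; your discriminant argument buys a cleaner conceptual picture and would generalize more readily, at the cost of citing a nontrivial structural result.
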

This lemma is a slightly modified and simplified version of~\cite[Lemma~6.4]{GKZ17}. For the reader's convenience, we give a detailed proof of  Lemma~\ref{000} in Section~\ref{2349}.

Now %assuming 
{taking Lemma~\ref{000} into account} we can apply Lemma~\ref{lm-prim-pnt} to the set $A_l$ which together with~\eqref{eq12} and Lemma~\ref{003} gives
\[
\#\{P\in\mathcal{P}_{\mathrm{sym}}^{*}(\h):\,\mu_P\left(\T_{\beta_1,\beta_2}\right)=l\}=\frac{\Vol(A_l)}{2\zeta(m+1)}\,\h^{m+1}+O\left(\h^{m}\,\log^{\chi(m)}\h\right),
\]
and, due to~\eqref{1950}, {we conclude} that
\begin{equation}\label{eq13}
\Phi_{\beta_1,\beta_2}(Q)=\frac{\h^{m+1}}{2\zeta(m+1)}\,\sum\limits_{l=0}^{2m}l\,\Vol(A_l)+O\left(\h^{m}\,\log^{\chi(m)}\h\right).
\end{equation}

To compute the sum on the right-hand side of (\ref{eq13}) consider the random polynomial 
\[
G(z):=\sum_{k=0}^{m-1}\xi_k (z^k+z^{2m-k})+\xi_mz^{m},
\]
where the random vector 
\begin{equation}\label{1102}
\left(\frac{\xi_0}{\lambda_0/\sqrt2},\dots,\frac{\xi_{m-1}}{\lambda_{m-1}/\sqrt2},\frac{\xi_m}{\lambda_m}\right)
\end{equation}
 is uniformly distributed over the $(m+1)$-dimensional unit ball $\B^{m+1}$. Then, by definition of  $A_l$ and since the semi-axes of $\mathcal{E}_\bw$ are $\lambda_0/\sqrt2,\dots,\lambda_{m-1}/\sqrt2,\lambda_m$, we have
\begin{equation}\label{eq14}
\P\left[\mu_G\left(\T_{\beta_1,\beta_2}\right)=l\right]=\frac{\Vol(A_l)}{\Vol(\mathcal{E}_\bw)}.
%=\frac{\lambda_0\cdots \lambda_n\,\Gamma\left(\frac12+\frac{m+1}{2}\right)}{\pi^{(m+1)/2}}\,\Vol_{m+1}(A_k).
\end{equation}
Taking  $z=e^{i\theta}\in \T$ leads to
\begin{align*}
G(z)&=\sum_{k=0}^{m-1}\xi_k (e^{ik\theta}+e^{i(2m-k)\theta})+\xi_me^{im\theta}=2e^{im\theta}\,\left(\sum_{k=0}^{m-1}\xi_k \frac{e^{-\iu(m-i)\theta}+e^{i(m-k)\theta}}{2}+\frac{\xi_m}{2}\right)\\
&=2e^{im\theta}\,\left(\sum_{k=1}^{m}\xi_{m-k} \cos\left(k\theta\right)+\frac{\xi_m}{2}\right)=:2e^{im\theta}\,\tilde F(\theta),
\end{align*}
%Therefore,
{and, hence,}
\begin{equation}\label{2103}
\P\left[\mu_G\left(\T_{\beta_1,\beta_2}\right)=l\right]=\P\left[\mu_{\tilde F}\left(\left[\beta_1,\beta_2\right]\right)=l\right].
\end{equation}
The latter probability  is still difficult to calculate because of the dependency of the coefficients of $\tilde F$.
However, by proper normalization, which does not affects the roots, we can \tc{achieve   independence}.

Namely, let $\eta_0,\ldots, \eta_m$ be i.i.d. real-valued  standard Gaussian random variables and let $Z$ be a standard exponential random variable. It is known (see, e.g.,~\cite[Chapter~2]{FZ90}) that the random vector
\[
\frac{(\eta_0,\eta_1,\dots,\eta_m)}{\Big(\sum\limits_{k=0}^{m}\eta_k^2+Z\Big)^{1/2}}
\]
is uniformly distributed in the unit ball $\B^{m+1}$, and, hence, has  the same distribution as~\eqref{1102}. Thus,
\[
\frac{\Big(\frac{\lambda_0\eta_0}{\sqrt2},\dots,\frac{\lambda_{m-1}\eta_{m-1}}{\sqrt2},\lambda_m\eta_m\Big)}{\Big(\sum\limits_{k=0}^{m}\eta_k^2+Z\Big)^{1/2}}\eqdistr (\xi_0,\dots,\xi_m).
\]
%Since
{Using the fact that} dividing a polynomial by a non-zero constant does not affect its roots, we conclude that the polynomials $\tilde F(\theta)$ and
\[
F(\theta):=\frac{\lambda_m}2\eta_m+\frac{1}{\sqrt2}\sum\limits_{k=1}^m \lambda_{m-k}\eta_{m-k}\,\cos(k\theta)
\]
have the same distribution of the roots and
\[
\P\left[\mu_{\tilde F}\left(\left[\beta_1,\beta_2\right]\right)=l\right]=\P\left[\mu_{ F}\left(\left[\beta_1,\beta_2\right]\right)=l\right].
\]
 Combining this with~\eqref{eq14} and~\eqref{2103} gives
\begin{equation}\label{0918}
\sum\limits_{l=0}^{2m}l\,\Vol(A_l)=\Vol(\mathcal{E}_\bw)\sum\limits_{l=0}^{2m} l\,\P[\mu_{ F}\left(\left[\beta_1,\beta_2\right]\right)=l]
=\Vol(\mathcal{E}_\bw)\E [\mu_{ F}\left(\left[\beta_1,\beta_2\right]\right)].
\end{equation}
Finally, applying the Edelman--Kostlan formula (see~\cite[Theorem~3.1]{EK95}) to the random function $F$
% with vector $v(t)=\left(\frac12,\cos(t),\ldots, \cos(mt)\right)$ and covariance matrix $C=\diag\left\{\lambda_m^{-2},\ldots,\lambda_0^{-2}\right\}$, we get
leads to
\[
\E [\mu_{\tilde F}\left(\left[\beta_1,\beta_2\right]\right)] =\frac{1}{\pi}\,\int\limits_{\beta_1}^{\beta_2}{p_{\bw}(t)}\,dt,
\]
where
\[
p_{\bw}(t)=\left[\frac{\partial^2}{\partial x\partial y}\log\left(\frac{\lambda_m^{2}}{2}+\sum\limits_{k=1}^{m}\lambda_{m-k}^{2}\,\cos(kx)\cos(ky)\right)\Big|_{x=y=t}\right]^{1/2},
\]
which together with~\eqref{eq13} and~\eqref{0918} finishes the proof.

\section{Proofs of Lemma~\ref{003} and Lemma~\ref{000} }\label{2349}
\subsection{Proof of Lemma~\ref{003}}
The proof follows {the scheme described in}~\cite{Ku09}.

For non-negative functions $f, g$ we write $f\ll g$ if there exists a non-negative constant $C$ depending on $m,\lambda$ only, such that $f\leq Cg$. Given a polynomial $P(t)=a_nt^n+\ldots+a_1t+a_0$ denote by $H(P)$ its na\"ive height: $H(P):=\max_{0\leq i\leq n}|a_i|$. Our first step is to prove the lemma for the na\"ive height instead of $h_\lambda$.

Let $\tilde r(\h)$ be the number of symmetric integral polynomials of degree $2m$ and with the na\"ive height at most $\h$
%representing
{which can be represented} as a product of two symmetric polynomials of positive degrees. Denote by  $s(\h)$  the number of pairs $(P_1$, $P_2)$ of symmetric integral polynomials  such that $\deg P_1 +\deg P_2 = 2m$ and 
\[
H(P_1)H(P_2)\leq \h.
\]
%Since 
{It is easy to see that} the number of  symmetric integral polynomials of degree $2k$ and with the na\"ive height  $\h$ is of order $O(\h^k)$. Hence,
\[
s(\h)\ll \sum\limits_{k=1}^{m-1}\sum\limits_{\substack{x,y\in\Z, x,y \ge 1,\\ xy\leq \h}}x^{k}y^{m-k}\ll \h^m\log^{\chi(m)}\h.
\]
%where for the proof of the second part see~\cite[eq. (3.2)]{Ku09}. 
{The proof of the second inequality is given in~\cite[eq. (3.2)]{Ku09}.}

It is known (see, e.g., \cite[Theorem 4.2.2]{Prasolov}) that if $P_1$ and $P_2$ are integral polynomials of degrees $n_1$ and $n_2$ respectively, then
\[
\left(2^{n_1+n_2-2}\sqrt{n_1+n_2+1}\right)^{-1}H(P_1)H(P_2)\leq H(P_1P_2),
\]
which implies
\[
\tilde r(\h)\leq \sum_{n_1+n_2=2m}s\left(2^{2m-2}\left(2m+1\right)^{1/2}\,\h\right)\ll \h^m\log^{\chi(m)}\h.
\]
Now to finish the proof it is sufficient to note that
\[
H(P)\ll h_\lambda(P)\ll H(P).
\]

\subsection{Proof of Lemma~\ref{000}}

Recall that  $A_l\subset\R^{m+1}$ is \tc{a set}  of points $(a_0,\ldots,a_m)\in \mathcal{E}_\bw$ such that the polynomial $P(z)=a_0z^{2m}+\ldots+a_mz^{m}+\ldots+a_0$ satisfies $\mu_P(\T_{\beta_1,\beta_2})=l$. For $z=e^{i\theta}$ we have
\[
P(z)=2e^{im\theta}\,\left(\sum_{k=1}^{m}a_{m-k} \cos\left(k\theta\right)+\frac{a_m}{2}\right)=:2e^{im\theta}\,\tilde P(\theta).
\]
Hence  $A_l$ is a set of points $(a_0,\ldots,a_m)\in \mathcal{E}_\bw$ such that $\mu_{\tilde P}([\beta_1,\beta_2])=l$.

{It is easy to see that} the boundary of $A_l$ is contained in the union of three sets:
\begin{enumerate}
	\item the boundary of $\mathcal{E}_\bw$;
	\item  the set
	\[
	A' = \left\{(a_0,\dots,a_m)\in\mathcal{E}_\bw:\tilde P(\beta_1)=0\quad\textrm{or}\quad\tilde P(\beta_2)=0\right\};
	\]
	\item the set $A''$ consisting of the points $(a_0,\dots,a_m)$ such that  the trigonometric polynomial $\tilde P$ has double real roots in $[\beta_1,\beta_2]$.
\end{enumerate}
Thus it is enough to show that each of these sets is of Lipschitz class.

\emph{(i) The boundary of $\mathcal{E}_\bw$.}	
Since $\mathcal{E}_\bw$ is a convex body, according to~\cite[Theorem~2.6]{mW12} its boundary  belongs to the Lipschitz class.

\emph{(ii) The set $A'$.} 
%Let, say, 
{Without loss of generality we can assume that} $P(\beta_1)=0$, which is equivalent to
\[
a_m=-2\sum_{k=1}^{m}a_{m-k} \cos\left(k\beta_1\right).
\]
Since $(a_0,\ldots,a_m)\in \mathcal{E}_\bw$, we have $a_0,\dots,a_{m-1}\leq C:=\max\limits_i\lambda_i$. Consider a Lipschitz map $\phi=(\phi_0,\dots,\phi_m):[0,1]^m\to\R^{m+1}$ defined  as 
\[
\phi_i(t_0,\dots,t_{m-1}):=Ct_i,\quad i=0,\dots,m-1,
\]
and
\[
\phi_m(t_0,\dots,t_{m-1}):=-2C\sum_{k=1}^{m}t_{m-k} \cos\left(k\beta_1\right).
\]
We obviously have
\[
a_i=\phi_i(a_0/C,\dots,a_{m-1}/C),\quad i=0,\dots,m-1,
\]
which implies $A'\subset\phi([0,1]^m)$. Therefore $A'$ is of Lipschitz class.

\emph{(iii) The set $A''$.}	
Suppose that $(a_0,\dots,a_m)\in A''$. Then
\[
\tilde{P}(\theta)=\sum_{k=1}^{m}a_{m-k} \cos\left(k\theta\right)+\frac{a_m}{2}
\]
has a multiple real root, say $\beta_0$, which implies
\begin{equation}\label{1441}
\tilde{P}(\beta_0)= 0, \quad
\tilde{P'}(\beta_0)= 0,
\end{equation}
or, excluding the trivial case $\beta_0=0$, equivalently,
\[
a_{m-1} = - \sum_{k=2}^m ka_{m-k}\frac{\sin(k\beta_0)}{\sin\beta_0},
\]
\begin{equation*}
a_m=-2\sum_{k=2}^{m}a_{m-k} \cos\left(k\beta_0\right)+2\cos\beta_0\sum_{k=2}^mka_{m-k}\frac{\sin(k\beta_0)}{\sin\beta_0}.
\end{equation*}
Again, $a_0,\dots,a_{m-1}\leq C:=\max\limits_i\lambda_i$. Moreover, we have $|\beta_0|\leq\pi$ (see~\eqref{2325}). Consider a map
$\phi=(\phi_0,\dots,\phi_{m}) : [0, 1]^{m} \to\R^{m+1} $
defined as 
\[
\phi_i(t,t_0,\dots,t_{m-2}):=Ct_i,\quad i=0,\dots,m-2,
\]
\[
\phi_{m-1}(t,t_0,\dots,t_{m-2}):= - C\sum_{k=2}^m kt_{m-k}\frac{\sin(k\pi t)}{\sin(\pi t)},
\]
and
\[
\phi_{m}(t,t_0,\dots,t_{m-2}):= -2C\sum_{k=2}^{m}t_{m-k} \cos\left(k\pi t\right)+2C\cos(\pi t)\sum_{k=2}^mkt_{m-k}\frac{\sin(k\pi t)}{\sin\pi t}.
\]
Since $\phi$ is continuously differentiable in a compact, it satisfies the Lipschitz condition. We obviously have
\[
a_i=\phi_i(\beta_0/\pi,a_0/C,\dots,a_{m-2}/C),\quad i=0,\dots,m,
\]
which implies $A''\subset\phi([0,1]^m)$. Therefore $A''$ is of Lipschitz class.

\section{Proofs of Corollaries}\label{0023}
Consider the kernel
\[
K_{\bw}(x,y)=\frac{\lambda_m^{2}}{4}+2\sum\limits_{k=1}^{m}\lambda_{m-k}^{2}\,\cos(kx)\cos(ky).
\]
We obviously have
\begin{multline}\label{1242}
p_{\bw}(t)=\left[\frac{\partial^2}{\partial x\partial y}\log K_\bw(x,y)\Big|_{x=y=t}\right]^{1/2}\\
=\left[\frac{K_{\bw}(t,t)\cdot\frac{\partial^2}{\partial x\partial y}K_{\bw}(x,y)\big|_{x=y=t}-\frac{\partial}{\partial x} K_{\bw}(x,t)\big|_{x=t}\cdot \frac{\partial}{\partial y} K_{\bw}(t,y)\big|_{y=t}}{K_{\bw}^2(t,t)}\right]^{1/2}.
\end{multline}

\subsection{Proof of Corollary \ref{0003}}
%We have
{In this case the kernel looks as follows:}
\begin{align*}
K_{\bw}(x,y)&=\frac12\binom{2m}{m}+\sum\limits_{k=1}^{m}\binom{2m}{m-k}\,\cos(kx)\cos(ky)\\
&=\frac12\binom{2m}{m}+\sum\limits_{k=1}^{m}\binom{2m}{m-k}\,\frac{e^{-ikx}+e^{ikx}}{2}\cdot\frac{e^{-iky}+e^{iky}}{2}\\
&=\frac{e^{-im(x+y)}}{4}\,\left(\sum\limits_{k=0}^{2m}{\binom{2m}{k}e^{ik(x+y)}}+\sum\limits_{k=0}^{2m}{\binom{2m}{k}e^{iky}e^{i(2m-k)x}}\right)\\
&=\frac{e^{-im(x+y)}}{4}\Big(\left(1+e^{i(x+y)}\right)^{2m}+\left(e^{iy}+e^{ix}\right)^{2m}\Big)\\
&=:\frac{e^{-im(x+y)}}{4}\tilde K_{\bw}(x,y).
\end{align*}
Since the factor $e^{-im(x+y)}/4$ does not affect the result, we obtain
\begin{align*}
p_{\bw}(t)&=\left[\frac{\partial^2}{\partial x\partial y}\log K_{\bw}(x,y)\Big|_{x=y=t}\right]^{1/2}=\left[\frac{\partial^2}{\partial x\partial y}\log \tilde K_{\bw}(x,y)\Big|_{x=y=t}\right]^{1/2}\\
&=\left[\frac{\tilde K_{\bw}(t,t)\cdot\frac{\partial^2}{\partial x\partial y}\tilde K_{\bw}(x,y)\big|_{x=y=t}-\frac{\partial}{\partial x}\tilde K_{\bw}(x,t)\big|_{x=t}\cdot \frac{\partial}{\partial y}\tilde K_{\bw}(t,y)\big|_{y=t}}{\tilde K_{\bw}^2(t,t)}\right]^{1/2}.
\end{align*}
The task now is to find the partial derivatives of $\tilde K_{\bw}(x,y)$ at $x=y=t$. We have
\begin{equation*}\label{eq17}
\tilde K_{\bw}(t,t)=\left(1+e^{2it}\right)^{2m}+2^{2m}e^{2imt}=2^{2m}e^{2imt}\,\left((\cos t)^{2m}+1\right),
\end{equation*}
\begin{align*}
\frac{\partial}{\partial x}\tilde K_{\bw}(x,t)\big|_{x=t}&=\frac{\partial}{\partial y}\tilde K_{\bw}(t,y)=2ime^{2it}\left(1+e^{2it}\right)^{2m-1}+2im\,2^{2m-1}e^{2imt}\\
&=2im\,2^{2m-1}e^{2imt}\,\left(e^{it}(\cos t)^{2m-1}+1\right),
\end{align*}
and
\begin{align*}\label{eq19}
\frac{\partial^2}{\partial x\partial y}&\tilde K_{\bw}(x,y)\big|_{x=y=t}\\
&=-2me^{2it}\left(1+e^{2it}\right)^{2m-1}-2m(2m-1)e^{4it}\left(1+e^{2it}\right)^{2m-2}
-2m(2m-1)\,2^{2m-2}e^{2imt}\\&=-2m\,2^{2m-2}e^{2imt}\,\Big(2e^{it}(\cos t)^{2m-1}
+(2m-1)e^{2it}(\cos t)^{2m-2}+(2m-1)\Big).
\end{align*}
Therefore,
\begin{align*}
p_{\bw}(t)&=\sqrt{\frac{m}{2}}\,\Big((\cos t)^{2m}+1\Big)^{-1}\cdot\Big(e^{2it}(\cos t)^{4m-2}-2e^{it}(\cos t)^{4m-1}-(2m-1)(\cos t)^{2m}\\
&-(2m-1)e^{2it}(\cos t)^{2m-2}+(2m-1)e^{it}(\cos t)^{2m-1}+1\Big)^{1/2}.
\end{align*}
Using the identities 
\begin{align*}
e^{it}&=\cos t + i\sin t,\\
e^{2it}&=\cos (2t) + i\sin (2t)=2(\cos t)^2-1+2i\sin t\cos t,
\end{align*}
we conclude with
\begin{align*}
p_{\bw}(t)&=\sqrt{\frac{m}{2}}\cdot\frac{\Big(1-(\cos^2 t)^{2m-1}+(2m-1)(\cos t)^{2m-2}\,\left(1-\cos^2 t\right)\Big)^{1/2}}{(\cos t)^{2m}+1}\\
&=\sqrt{\frac{m}{2}}\cdot\frac{\sqrt{1-\cos^2 t}\,\left(\sum\limits_{k=0}^{2m-2}\left(\cos t\right)^{2k}+(2m-1)\left(\cos t\right)^{2m-2}\right)^{1/2}}{\left(\cos t\right)^{2m}+1}\\
&=\sqrt{\frac{m}{2}}\cdot\frac{\left|\sin t\right|\,\left(\sum\limits_{k=0}^{2m-2}\left(\cos t\right)^{2k}+(2m-1)\left(\cos t\right)^{2m-2}\right)^{1/2}}{\left(\cos t\right)^{2m}+1}.
\end{align*}

\subsection{Proof of Corollary \ref{0004}}

Before we start recall some trigonometric formulas which will be used in our calculations:
\begin{align*}
\frac{\sin\left((N+\frac12)(x-y)\right)}{\sin\frac{x-y}{2}}&=1+2\sum\limits_{k=1}^{N}\cos(k(x-y));\\
\sin \left(\frac{N(x-y)}{2}\right) &=\sum\limits_{k\text{ odd}}(-1)^{(k-1)/2}\binom{N}{k}\left(\cos\frac{x-y}{2}\right)^{N-k}\left(\sin\frac{x-y}{2}\right)^{k};\\
\cos \left(\frac{N(x-y)}{2}\right) &=\sum\limits_{k\text{ even}}(-1)^{k/2}\binom{N}{k}\left(\cos\frac{x-y}{2}\right)^{N-k}\left(\sin\frac{x-y}{2}\right)^{k}.
\end{align*}

Consider the function $p_{\bw}(t)$ with weights $\bw=(1,\ldots,1)$. In this case the kernel has the form
\begin{align*}
K_{\bw}(x,y)&=\frac{1}{2}+\sum\limits_{k=1}^{m}\cos(kx)\cos(ky)=\frac{1}{2}+\frac12\sum\limits_{k=1}^{m}\cos k(x+y)+\frac12\sum\limits_{k=1}^{m}\cos k(x-y)\\
&=\frac{\sin\left((m+\frac12)(x+y)\right)}{4\sin\frac{x+y}{2}}+\frac{\sin\left((m+\frac12)(x-y)\right)}{4\sin\frac{x-y}{2}}.
\end{align*}
%It should be noted that $\frac{\sin\left((m+\frac12)t\right)}{\sin\frac{t}{2}}$ is the well-known Dirichlet kernel.
%and
%\begin{equation}\label{1242}
%p_{\bw}(t)=\left[\frac{K_{\bw}(t,t)\cdot\frac{\partial^2}{\partial x\partial y}K_{\bw}(x,y)\big|_{x=y=t}-\frac{\partial}{\partial x} K_{\bw}(x,t)\big|_{x=t}\cdot \frac{\partial}{\partial y} K_{\bw}(t,y)\big|_{y=t}}{K_{\bw}^2(t,t)}\right]^{1/2}.
%\end{equation}
In order to compute $p_{\bw}(t)$ let us find the partial derivatives of the kernel $K_{\bw,m}(x,y)$ at $x=y=t$. Recall that $b_m=2m+1$. Thus we have
\[
K_{\bw}(t,t)=\frac{\sin(b_mt)}{4\sin t}+\frac{\sin\left((m+\frac12)(x-y)\right)}{4\sin\frac{x-y}{2}}\Big|_{x=y=t}=\frac{b_m}{4}+\frac{\sin(b_mt)}{4\sin t},
\]
\begin{align*}
\frac{\partial}{\partial x}K_{\bw,m}(x,t)\Big|_{x=t}&=\frac{\partial}{\partial y}K_{\bw,m}(t,y)\Big|_{y=t}\\
&=\left(\frac{m\cos \frac{b_m(x-y)}{2})}{4\sin\frac{x-y}{2}}-\frac{\sin \frac{2m(x-y)}{2}}{8\sin^2\frac{x-y}{2}}\right)\Big|_{x=y=t}+\frac{m\cos(b_mt)}{4\sin t}-\frac{\sin (2mt)}{8\sin^2t}\\
&=\frac{m}{4\sin\frac{x-y}{2}}\Big|_{x=y=t}-\frac{m}{4\sin\frac{x-y}{2}}\Big|_{x=y=t}+\frac{m\cos(b_mt)}{4\sin t}-\frac{\sin (2mt)}{8\sin^2t}\\
&=\frac{m\cos(b_mt)}{4\sin t}-\frac{\sin (2mt)}{8\sin^2t},
\end{align*}
and
\begin{align*}
\frac{\partial^2}{\partial x\partial y}K_{\bw,m}(x,y)\Big|_{x=y=t}&=\left(\frac{m^2\sin \frac{b_m(x-y)}{2})}{4\sin\frac{x-y}{2}}+\frac{m\cos \frac{2m(x-y)}{2})}{4\sin^2\frac{x-y}{2}}-\frac{\sin \frac{2m(x-y)}{2})}{8\sin^3\frac{x-y}{2}}\right)\Big|_{x=y=t}\\
&-\frac{m^2\sin(b_mt)}{4\sin t}-\frac{m\cos (2mt)}{4\sin^2t}+\frac{\cos t\, \sin (2mt)}{8\sin^3t}\\
&=\frac{m^2b_m}{4}+\frac{m}{4\sin^2\frac{x-y}{2}}\Big|_{x=y=t}-\frac{m^2(2m-1)}{4}-\frac{m}{4\sin^2\frac{x-y}{2}}\Big|_{x=y=t}\\
&+\frac{m(2m-1)(m-1)}{2}-\frac{m^2\sin(b_mt)}{4\sin t}-\frac{m\cos (2mt)}{4\sin^2t}+\frac{\cos t\, \sin (2mt)}{8\sin^3t}\\
&=\frac{\cos t\, \sin (2mt)}{8\sin^3t}-\frac{m^2\sin(b_mt)}{4\sin t}-\frac{m\cos (2mt)}{4\sin^2t}+\frac{(m^2+m)b_m}{12}.
\end{align*}
Substituting this into~\eqref{1242} finishes the proof.

\bibliographystyle{plainnat}

\end{document}